\documentclass[12pt]{article}
\usepackage{amsthm}
\usepackage[margin=2cm]{geometry}
\usepackage[fleqn]{amsmath}
\usepackage{amssymb}
\usepackage{enumitem}
\usepackage{tikz}
\usetikzlibrary{decorations.markings}

\setlength{\parindent}{0cm}
\setlength{\parskip}{0.5cm}

\newtheorem{theorem}{Theorem}

\newtheorem{proposition}{Proposition}
\newtheorem{corollary}{Corollary}

\DeclareMathOperator{\Cay}{Cay}
\DeclareMathOperator{\Aut}{Aut}

\begin{document}
\title{Mixed Moore Cayley graphs}
\author{Grahame Erskine\\{\small Open University, Milton Keynes, UK}\\ \texttt{\small grahame.erskine@open.ac.uk}}

\date{}

\maketitle
\let\thefootnote\relax\footnote{Mathematics subject classification: 05C25, 05C35}
\let\thefootnote\relax\footnote{Keywords: degree-diameter problem, mixed graphs, Moore graphs}
\begin{abstract}\noindent
The degree-diameter problem seeks to find the largest possible number of vertices in a graph having given diameter and given maximum degree.
There has been much recent interest in the problem for mixed graphs, where we allow both undirected edges and directed arcs in the graph.
For a diameter 2 graph with maximum undirected degree $r$ and directed out-degree $z$, 
a straightforward counting argument yields an upper bound $M(z,r,2)=(z+r)^2+z+1$ for the order of the graph. 
Apart from the case $r=1$, the only three known examples of mixed graphs attaining this bound are Cayley graphs,
and there are an infinite number of feasible pairs $(r,z)$ where the existence of mixed Moore graphs with these parameters is unknown.
We use a combination of elementary group-theoretical arguments and computational techniques to rule out the existence of further examples of mixed Cayley graphs
attaining the Moore bound for all orders up to 485.
\end{abstract}

\tikzset{middlearrow/.style={
        decoration={markings,
            mark= at position 0.9 with {\arrow[scale=2]{#1}} ,
        },
        postaction={decorate}
    }
}

\section{Preliminaries}
The degree-diameter problem has its roots in the efficient design of interconnection networks.
We try to find the maximum possible number of vertices in a graph where we constrain both the largest degree of any vertex and the diameter of the graph.
For more information on the history and development of the degree-diameter problem, see the survey~\cite{miller2005moore}.
The degree-diameter problem is typically studied in both the undirected and directed cases.
Recently, there has been much interest in the problem as it is applied to \emph{mixed} graphs, where we allow both undirected edges and directed arcs in the graph.

In the undirected case, an upper bound for the largest possible order of a graph of maximum degree $d>2$ and diameter $k>1$ 
is the \emph{Moore bound}~\cite{miller2005moore}:
\[M(d,k)=1+d\frac{(d-1)^k-1}{d-2}\]
A graph attaining this bound is known as a \emph{Moore graph}. 
It is known~\cite{Hoffman1960,Bannai1973} that such a graph must have diameter 2 and degree $d\in \{2,3,7,57\}$,
with existence of the graph corresponding to $d=57$ being a famous open problem.

For digraphs, the Moore bound for graphs of maximum out-degree $d>1$ and diameter $k>1$ has an even simpler form:
\[M(d,k)=\frac{d^{k+1}-1}{d-1}\]
It is well-known~\cite{Plesnik1974,Bridges1980} that no Moore digraphs of diameter greater than one exist apart from the directed 3-cycle.

In this paper we concentrate on the case of \emph{mixed} graphs where we allow both undirected edges and directed arcs in our graphs.
We can view the case of mixed graphs either as a generalisation of the undirected case (allowing arcs as well as edges) or as a specialisation of the directed case
(where we insist that a number of the arcs must be present with their reverses).
We adopt the usual notation in the literature. The maximum undirected degree of any vertex is $r$, and the maximum directed out-degree is $z$.
The general expression for the Moore bound for mixed graphs is rather more awkward in closed form~\cite{Buset2016}.
However, Nguyen, Miller and Gimbert~\cite{Nguyen2007} showed in 2007 that no mixed Moore graph can exist for diameters greater than 2.
We therefore concentrate on the diameter 2 case where it is straightforward to show that the Moore bound is :
\[M(z,r,2)=(z+r)^2+z+1\]

In 1979, Bos\'ak~\cite{Bosak1979} derived (using a modification of the spectral method used by Hoffman and Singleton~\cite{Hoffman1960} in the undirected case)
a numerical constraint on the sets of parameters $(r,z)$ for which a mixed Moore graph of diameter 2 can exist.
Bos\'ak's condition is that $r=(c^2+3)/4$ for some odd integer $c$ dividing $(4z-3)(4z+5)$.
In contrast with the undirected case, this condition means that there are an infinite number of feasible pairs $(r,z)$.

We will concentrate on the restricted problem of mixed \emph{Cayley graphs}.
Given a finite group $G$ and a subset $S\subseteq G\setminus\{1\}$, we define the Cayley graph $\Cay(G,S)$ to have vertex set $G$
and arcs from a vertex $g$ to $gs$ for every $s\in S$. If both $s$ and $s^{-1}$ are in $S$, then for every $g$ we have arcs from $g$ to $gs$ and $gs$ to $g$
which we view as an undirected edge between $g$ and $gs$.
Thus if $S=S_1\cup S_2$ where $S_1=S_1^{-1}$ and $S_2\cap S_2^{-1}=\emptyset$, then $\Cay(G,S)$ is a mixed graph of undirected degree $r=|S_1|$
and directed degree $z=|S_2|$.

It is easy to see that a Cayley graph $\Cay(G,S)$ has diameter at most 2 if and only if every element of $G$ can be expressed as a product of at
most 2 elements of $S$. Thus we relate the properties of the graph to properties of the group $G$ and subset $S$.

\section{Known Moore graphs}

We can see (Table~\ref{tab:moorefeas}) how Bos\'ak's condition means that the range of feasible pairs $(r,z)$ for which a Moore graph can exist is quite limited.
Nevertheless, there are infinitely many pairs $(r,z)$ for which the existence of a mixed Moore graph (Cayley or otherwise) is not known.
In fact, almost all the feasible pairs remain open as to the existence or otherwise of a Moore graph.

\begin{table}\centering
\begin{tabular}{|ccc|}
\hline
Undirected & Directed & Order \\
degree $r$ & degree $z$ & $n$\\
\hline
1 & any & $r^2+2r+3$\\
\hline
3 & 1 & 18\\
  & 3 & 40\\
	& 4 & 54\\
	& 6 & 88\\
	& 7 & 108\\
	& $\cdots$ & $\cdots$\\
\hline
7 & 2 & 84\\
	& 5 & 150\\
	& 7 & 204\\
	& $\cdots$ & $\cdots$\\
\hline
13 & 4 & 294\\
   & 6 & 368\\
	& $\cdots$ & $\cdots$\\
\hline
21 & 1 & 486\\
	& $\cdots$ & $\cdots$\\
\hline
$\cdots$& $\cdots$ & $\cdots$\\
\hline
\end{tabular}
\caption{Feasible values for mixed Moore graphs}
\label{tab:moorefeas}
\end{table}

In the case $r=1$, it is immediate that any positive integer $z$ yields a feasible pair, and indeed such Moore graphs always exist by the following construction.
The \emph{Kautz digraphs} $Ka(d,2)$ are a family of mixed Moore graphs of diameter 2, directed degree $z=d-1$ and undirected degree $r=1$.
The vertices are the words $ab$ of length 2 over an alphabet of $d+1$ letters where we insist $a\neq b$. So there are $d(d+1)=(z+r)^2+z+1$ vertices.
There is a directed edge from $ab$ to $bc$ for all of the $d$ eligible values of $c$. The edge from $ab$ to $ba$ can be considered as the
undirected edge since the reverse edge also exists. All other edges from $ab$ are purely directed.

The graph has diameter 2 since there is a path $ab\to xy$ of length 1 if $x=b$ and $ab\to bx \to xy$ of length 2 if $x\neq b$.
An example in the case $d=2$ is shown in Figure~\ref{fig:kautz}.

\begin{figure}\centering

\begin{tikzpicture}[x=0.2mm,y=-0.2mm,inner sep=0.2mm,scale=0.7,thick,vertex/.style={circle,draw,minimum size=10,fill=lightgray}]
\small
\node at (380,440) [vertex] (v1) {$ac$};
\node at (460,300) [vertex] (v2) {$cb$};
\node at (380,580) [vertex] (v3) {$ca$};
\node at (300,300) [vertex] (v4) {$ba$};
\node at (580,240) [vertex] (v5) {$bc$};
\node at (180,240) [vertex] (v6) {$ab$};
\path
	(v1) edge[->=stealth] (v2)
	(v1) edge[ultra thick] (v3)
	(v2) edge[->=stealth] (v4)
	(v2) edge[ultra thick] (v5)
	(v5) edge[->=stealth] (v3)
	(v4) edge[->=stealth] (v1)
	(v4) edge[ultra thick] (v6)
	(v6) edge[->=stealth] (v5)
	(v3) edge[->=stealth] (v6)
	;
\end{tikzpicture}
\caption{The Kautz digraph $Ka(2,2)$}
\label{fig:kautz}
\end{figure}
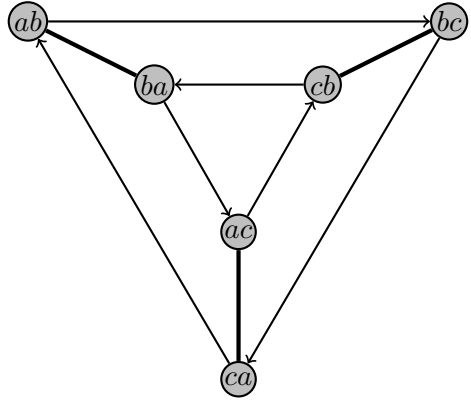

The Kautz digraphs $Ka(d,2)$ are not Cayley graphs for all values of $d$, and in fact they turn out to be Cayley graphs precisely when $d+1$ is a prime power
(see for example~\cite{Brunat1995}).
Until very recently, these graphs and a single further example of Bos\'ak with parameters $r=3,z=1$ (and hence order 18) were the only known mixed Moore graphs.
(See the survey paper~\cite{miller2005moore} for more on these known graphs.)

Recently, J\o rgensen~\cite{Joergensen2015} has reported a pair of graphs with $r=3,z=7$ and hence order 108.
These graphs are interesting because they are Cayley graphs (as indeed is Bos\'ak's graph of order 18).
The two graphs are in fact a transpose pair, where one graph is obtained from the other by reversing the direction of the directed arcs.

On the negative side, no simple combinatorial argument has yet been found to rule out any feasible parameter pairs satisfying Bos\'ak's condition.
For small graphs, an exhaustive computational approach is now becoming feasible with advances in CPU power and algorithms.
L\'opez, P\'erez-Ros\'es and Pujol\`as~\cite{Lopez2014} ruled out the existence of mixed Moore Cayley graphs of orders 40 and 54.
Recently, L\'opez,  Miret and Fern\'andez~\cite{Lopez2016} have used computational techniques to show that 
there are no mixed Moore graphs (Cayley or otherwise) at orders 40, 54 or 84.

\section{Searching for new examples}
It seems unlikely that brute-force exhaustive search algorithms will take us much further in the table.
Inspired by J\o rgensen's result and the fact that the Bos\'ak graph of order 18 is also a Cayley graph,
we describe a search algorithm to extend the work of L\'opez, P\'erez-Ros\'es and Pujol\`as~\cite{Lopez2014} 
to look for further examples of mixed Moore Cayley graphs.

Given a feasible pair $(r,z)$, we wish to find a group $G$ and a set $S\subseteq G$ such that the graph $\Cay(G,S)$ has order $n=(z+r)^2+z+1$, 
undirected degree $r$, directed degree $z$ and diameter 2.
For ease of explanation we split $S$ into the undirected generators $S_1$ and the directed generators $S_2$.
Then $|S_1|=r,|S_2|=z,S_1=S_1^{-1},S_2\cap S_2^{-1}=\emptyset$.

The naive approach of simply testing all possible sets $S$ very quickly becomes computationally infeasible.
Our strategy therefore is to look for properties of Moore graphs and corresponding properties of Cayley graphs which will allow us to reduce the search space.
We begin with some elementary yet useful properties of mixed Moore graphs.
\begin{proposition}\label{prop:mmprops}
Let $\Gamma$ be a mixed Moore graph of diameter 2, undirected degree $r$ and directed degree $z$.
\begin{enumerate}[label=(\roman*),topsep=0pt,itemsep=0.5ex]
	\item If $u,v\in V(\Gamma)$ are distinct vertices then there is one and only one path of length 1 or 2 from $u$ to $v$.
	\item $\Gamma$ contains no undirected cycle of length 3 or 4.
	\item $\Gamma$ is totally regular (all vertices have the same directed in-degree and out-degree $z$).
	\item Every arc in $\Gamma$ is contained in exactly one directed 3-cycle.
\end{enumerate}
\end{proposition}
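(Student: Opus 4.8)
The plan is to get all four parts from a single tight count of short directed walks, with only the directed in-regularity in (iii) needing anything beyond that count. Fix a vertex $u$ and count directed walks of length at most $2$ starting at $u$. Since $\Gamma$ has diameter $2$, every one of the $n-1$ vertices other than $u$ is the endpoint of at least one such walk, so the number of pairs (walk, endpoint) with endpoint $\neq u$ is at least $n-1$. On the other hand there are $\mathrm{outdeg}(u)$ length-$1$ walks and $\sum_{w}\mathrm{outdeg}(w)$ length-$2$ walks, the sum taken over the out-neighbours $w$ of $u$; exactly $r_u$ of the length-$2$ walks return to $u$, one for each of the $r_u$ undirected edges at $u$ and no others, since a pure arc has no reverse. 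So, writing $\mathrm{outdeg}(u)=r_u+z_u$ with $r_u$ the undirected degree of $u$ and $z_u$ its number of out-arcs, the number of pairs equals $z_u+\sum_w\mathrm{outdeg}(w)$, which is at most $z+(r+z)^2=n-1$ because $z_u\le z$ and $u$ has $\mathrm{outdeg}(u)\le r+z$ out-neighbours, each of out-degree at most $r+z$. Every inequality must therefore be an equality, which forces each vertex to have undirected degree exactly $r$ and directed out-degree exactly $z$ (the out-part of (iii)), and, for every $u$, a unique walk of length $1$ or $2$ to each other vertex, which is (i).

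Parts (ii) and (iv) then follow from (i). An undirected $3$-cycle on $\{a,b,c\}$ would give the distinct walks $a\to b$ and $a\to c\to b$, and an undirected $4$-cycle $a,b,c,d$ the distinct length-$2$ walks $a\to b\to c$ and $a\to d\to c$; each contradicts the uniqueness in (i), so (ii) holds. For (iv) I read ``arc'' as a directed arc not underlying an undirected edge (an arc of that latter kind lies in no directed $3$-cycle, since by (i) the unique walk of length at most $2$ between its endpoints has length $1$). Given a pure arc $a\to b$, the reverse arc is absent, so the unique walk of length at most $2$ from $b$ to $a$ has length $2$, say $b\to c\to a$, and then $c\notin\{a,b\}$ automatically; since every directed $3$-cycle through $a\to b$ has the form $a\to b\to c'\to a$, this $c$ is unique and gives the unique such cycle.

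The part I expect to take the most care is the remaining claim in (iii), that the arc in-degree is also everywhere $z$, since the adjacency matrix $A$ of a mixed graph is not symmetric, so the usual spectral argument does not apply verbatim. I would record (i) as the matrix identity $A+A^{2}=J+(r-1)I$, with $J$ the all-ones matrix: the off-diagonal entry of $A+A^{2}$ at $(u,v)$ counts walks of length $1$ or $2$ from $u$ to $v$ and so equals $1$, while the diagonal entry equals the undirected degree $r$. Let $d$ be the column vector of arc in-degrees. Left-multiplying the identity by the all-ones row vector $\mathbf{1}^{\top}$, and using that $\mathbf{1}^{\top}A$ is the row vector of in-degrees, which by the undirected-regularity already shown equals $r\mathbf{1}^{\top}+d^{\top}$, a short and routine rearrangement yields
\[(A^{\top}+(r+1)I)\,d=(n-1-r^{2})\,\mathbf{1}.\]
Writing $d=z\mathbf{1}+f$ and using $A^{\top}\mathbf{1}=(r+z)\mathbf{1}+f$ together with $n-1-r^{2}=z(2r+z+1)$, this reduces to $A^{\top}f=-(r+z+1)f$; moreover $\mathbf{1}^{\top}f=0$, since $\Gamma$ has exactly $nz$ arcs by the out-regularity, so $Jf=\mathbf{1}(\mathbf{1}^{\top}f)=0$. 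Substituting $A^{\top}f=-(r+z+1)f$ into the transposed identity $A^{\top}+(A^{\top})^{2}=J+(r-1)I$ then gives $\big((r+z)(r+z+1)-(r-1)\big)f=0$, and since $(r+z)(r+z+1)-(r-1)=n$ we conclude $nf=0$, hence $f=0$ and the arc in-degree equals $z$ at every vertex. With the out-part from the count, this is (iii). The only real obstacle is keeping that rearrangement honest; once the matrix identity is in hand the rest is forced. (Total regularity of mixed Moore graphs is also available in the literature, so one could quote it instead.)
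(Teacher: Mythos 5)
Your proposal is correct, and while parts (i) and (ii) follow the paper's route (the paper simply invokes the Moore-bound counting argument, which you write out explicitly; your remark that walks of length at most $2$ between distinct vertices are automatically paths keeps the translation from walks to paths honest), parts (iii) and (iv) are handled genuinely differently. For (iii) the paper just cites Bos\'ak's total-regularity theorem, whereas you prove in-regularity from the identity $A+A^2=J+(r-1)I$; I checked the algebra: left-multiplying by $\mathbf{1}^{\top}$ does give $(A^{\top}+(r+1)I)d=(n-1-r^2)\mathbf{1}$, the substitution $d=z\mathbf{1}+f$ with $A^{\top}\mathbf{1}=(r+z)\mathbf{1}+f$ reduces this to $A^{\top}f=-(r+z+1)f$, and applying the transposed identity to $f$ (using $\mathbf{1}^{\top}f=0$, hence $Jf=0$) yields $\bigl((r+z)(r+z+1)-(r-1)\bigr)f=nf=0$, so $f=0$. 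This buys a self-contained proposition at the cost of half a page; the paper's citation is the cheaper option, as you acknowledge. For (iv) the paper leans on total regularity: the $z$ in-neighbours of $u$ are each reached by a purely directed $2$-path through pairwise distinct out-neighbours, which matches each out-arc with its unique triangle. Your argument is shorter and does not use (iii) at all: for a pure arc $a\to b$ the reverse arc is absent, so the unique walk of length at most $2$ from $b$ to $a$ has length $2$ and closes a triangle, and uniqueness over all directed $3$-cycles through $a\to b$ is immediate from (i). Your reading of ``arc'' as a pure arc agrees with the paper's own proof (which only considers the $z$ directed out-neighbours), and your parenthetical correctly explains why that reading is forced. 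The only substantive difference in output is that the paper's argument additionally shows the other two sides of the triangle are pure arcs; your proof does not, but the statement as written does not require it, and it would follow from (i) in one extra line if needed for the later Cayley-graph application.
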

\begin{proof}
Item \emph{(i)} follows immediately from the counting argument deriving the Moore bound by considering the spanning tree of $\Gamma$ rooted at $u$.
Item \emph{(ii)} is a consequence of \emph{(i)}.
Item \emph{(iii)} was proved by Bos\'ak~\cite{Bosak1979}.

To see why \emph{(iv)} is true, consider a vertex $u\in V(\Gamma)$. Then $u$ has $z$ directed out-neighbours $v_1,\ldots,v_z$.
Since $\Gamma$ is totally regular, $u$ must have $z$ directed in-neighbours $w_1,\ldots w_z$.
These cannot be at distance 1 from $u$, so each $w_i$ is reached by a path of length 2 from $u$.
There can be no undirected edges in any of these paths, since that would lead to the end vertices of such an edge violating \emph{(i)}.
So each $w_i$ is reached by a directed path of length 2 from $u$ passing through some $v_j$.
These $v_j$ must be distinct, since if any were repeated it would have two paths of length 2 to $u$.
Thus every arc $u\to v_j$ emanating from $u$ lies in the unique directed triangle $u\to v_j\to w_i\to u$.
\end{proof}

Now we can use these properties to develop constraints on our generating set $S=S_1\cup S_2$ to narrow the search for mixed Moore Cayley graphs.

\begin{proposition}\label{prop:mmcayprops}
Let $\Gamma$ be a mixed Moore graph of diameter 2, undirected degree $r$ and directed degree $z$.
Suppose that $\Gamma\cong\Cay(G,S)$ where $G$ is a group of order $n=(r+z)^2+z+1$ and the generating set $S$ consists of undirected generators $S_1$ and
directed generators $S_2$. Then:
\begin{enumerate}[label=(\roman*),topsep=0pt,itemsep=0.5ex]
	\item No element of $S_1$ has order 3 or 4.
	\item No element of $S_2$ is an involution.
	\item No pair of elements in $S_1$ has a product of order 2.
	\item No two distinct elements of $S$ commute, apart from the inverse pairs in $S_1$.
	\item $S$ is product-free (that is, $S\cap SS=\emptyset$).
	\item All non-identity products of two elements of $S$ are unique.
	\item The elements of $S_2$ are of two types:
		\begin{enumerate}[label=\arabic*.]
			\item Elements of order 3
			\item Triples of distinct elements $\{a,b,c\}$, each of order at least 4, such that $(ab)^{-1}=c$
		\end{enumerate}
\end{enumerate}
\end{proposition}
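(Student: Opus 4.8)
The plan is to translate each item into a statement about the group $G$ and the set $S$, using the vertex-transitivity of $\Cay(G,S)$ to reduce everything to short walks based at the identity vertex $1$: a walk of length $1$ from $1$ reaches some $s\in S$, a walk of length $2$ reaches some product $st$ with $s,t\in S$, and since $\Cay(G,S)$ has no loops, any such walk between two distinct vertices is automatically a path. Under this dictionary, items (i)--(vi) all follow from parts (i) and (ii) of Proposition~\ref{prop:mmprops}, while (vii) needs part (iv) together with the fact that the left translations $x\mapsto gx$ are automorphisms of $\Cay(G,S)$.

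For (i), an element $s\in S_1$ of order $3$ yields the undirected triangle on $1,s,s^2$ and one of order $4$ the undirected $4$-cycle on $1,s,s^2,s^3$ (a chord joining $1$ and $s^2$ would itself close a triangle), both forbidden by Proposition~\ref{prop:mmprops}(ii). Item (ii) is immediate: an involution in $S_2$ would lie in $S_2\cap S_2^{-1}$. For (iii), if $s,t\in S_1$ and $w=st$ has order $2$ then $w\ne 1$ and $w=w^{-1}=t^{-1}s^{-1}$, so $1\to s\to w$ and $1\to t^{-1}\to w$ are two length-$2$ paths from $1$ to $w$ with distinct middle vertices (if $s=t^{-1}$ then $w=1$), contradicting uniqueness in Proposition~\ref{prop:mmprops}(i). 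Items (iv)--(vi) run on the same principle: distinct commuting $a,b\in S$ with $ab\ne 1$ give the two length-$2$ paths $1\to a\to ab$ and $1\to b\to ab$; an element $c=ab$ lying in $S$ with $a,b\in S$ gives a length-$1$ path and a length-$2$ path from $1$ to $c$; and a coincidence $a_1b_1=a_2b_2=g\ne 1$ with $a_1\ne a_2$ gives two length-$2$ paths from $1$ to $g$. In every case Proposition~\ref{prop:mmprops}(i) is violated. One should of course check in each case that the degenerate configurations (a backtracking walk, $ab=1$, and so on) are exactly those already excluded by the hypotheses.

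Item (vii) is the substantive part. Fix $a\in S_2$; since $a^{-1}\notin S$ the arc $1\to a$ is genuinely directed, so by Proposition~\ref{prop:mmprops}(iv) it lies in a unique directed triangle, which has the form $1\to a\to ab\to 1$ for some $b\in S$; following the proof of Proposition~\ref{prop:mmprops}(iv) this triangle is totally directed, so reading off its other two arcs shows $b\in S_2$ and $c:=(ab)^{-1}\in S_2$ (product-freeness (v) gives $ab\notin S$, hence $c^{-1}\notin S$), with $abc=1$. If $a$ has order $3$ then $1\to a\to a^2\to 1$ is itself a directed triangle through $1\to a$ (again product-freeness makes each arc directed), so uniqueness forces $b=c=a$ and $a$ is a type-1 element. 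Otherwise $a$ has order at least $4$, and I claim $\{a,b,c\}$ consists of three distinct elements each of order at least $4$. The key step uses that $x\mapsto ax$ is an automorphism of $\Cay(G,S)$: it maps the unique triangle through $1\to a$ to the unique triangle through $a\to a^2$, so if $b=a$ then $1\to a\to a^2\to 1$ already contains the arc $a\to a^2$, these triangles coincide, and comparison forces $a^3=1$, a contradiction. Hence $b\ne a$; the cyclic symmetry of $abc=1$ lets the same argument be applied with $b$ or $c$ in the role of $a$, which makes $a,b,c$ pairwise distinct and, since an order-$3$ element's triangle is a singleton rather than one containing two further distinct elements, also forces each of them to have order at least $4$. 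Finally, because the triangle produced from any of $a,b,c$ is the same unordered triple, these triples partition $S_2$ into the type-1 singletons and the type-2 blocks $\{a,b,c\}$ with $(ab)^{-1}=c$.

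The routine part of the argument is (i)--(vi), which is pure bookkeeping against Proposition~\ref{prop:mmprops}(i)--(ii). I expect the main obstacle to be the closing part of (vii): not the relation $abc=1$, which drops out of the unique directed triangle at once, but the claim that when $a$ has order at least $4$ the elements $a,b,c$ are genuinely distinct and none has order $3$. This is precisely what the translation-automorphism and triangle-uniqueness argument is designed to deliver, and making it run cleanly also requires careful tracking of which arcs of the various triangles are directed and which vertices are distinct, for which product-freeness (v) is the repeated input.
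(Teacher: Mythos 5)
Your proposal is correct and follows essentially the same route as the paper, whose own proof is just a terse list of citations: each of (i)--(vi) is read off from Proposition~\ref{prop:mmprops}(i)--(ii) via the Cayley-graph dictionary at the identity vertex, and (vii) from the unique directed triangle through each arc given by Proposition~\ref{prop:mmprops}(iv); you merely supply the details (including the translation-automorphism argument for distinctness in (vii)) that the paper leaves implicit. The only cosmetic divergence is at (iii), where the paper invokes the forbidden undirected 4-cycle of Proposition~\ref{prop:mmprops}(ii) while you use two length-2 paths and Proposition~\ref{prop:mmprops}(i) -- an equivalent argument.
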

\begin{proof}
These facts follow immediately from the properties of the graph:
\begin{enumerate}[label=(\roman*),topsep=0pt,itemsep=0.5ex]
	\item follows from Proposition~\ref{prop:mmprops}\emph{(ii)}
	\item is immediate because $S_2$ is inverse-free.
	\item is true because such a pair would lead to an undirected 4-cycle.
	\item follows from Proposition~\ref{prop:mmprops}\emph{(i)}.
	\item follows from Proposition~\ref{prop:mmprops}\emph{(i)}.
	\item follows from Proposition~\ref{prop:mmprops}\emph{(i)}.
	\item follows from Proposition~\ref{prop:mmprops}\emph{(iv)}.\qedhere
\end{enumerate}
\end{proof}

We note that the conditions of Proposition~\ref{prop:mmcayprops}(v) and (vi) must also hold for any subset of $S$. This motivates the following definition.

Let $T\subseteq G$ with $T=T_1\cup T_2,T_1=T_1^{-1},T_2\cap T_2^{-1}=\emptyset,|T_1|=r',|T_2|=z'$.
Define $P(T)=|\{1\}\cup T\cup TT|$.
We say $T$ is a \emph{feasible} subset of generators if $P(T)=(z'+r')^2+z'+1$.

We have two further ways to reduce the number of sets $S$ we need to search for a given group $G$.
The first is the well-known result that if $\phi$ is an automorphism of the group $G$, then $\Cay(G,S)\cong\Cay(G,\phi(S))$.
So we need not consider all possible sets -- only orbit representatives under the action of $\Aut(G)$.

The second idea is to exploit the fact that all mixed Moore graphs must have even order (a consequence of Bos\'ak's condition).
So a suitable group $G$ for a Cayley graph must have even order, and may in many cases have an index 2 subgroup.
\begin{proposition}\label{prop:mmcayidx2}
Let $\Gamma$ be a mixed Moore graph of diameter 2, undirected degree $r$ and directed degree $z$.
Suppose that $\Gamma\cong\Cay(G,S)$ where $G$ is a group of order $n=(z+r)^2+z+1$ and the generating set $S$ consists of undirected generators $S_1$ and
directed generators $S_2$. Suppose further that $G$ admits an index 2 subgroup $H$ and that $|S_1\cap H|=s_1$ and $|S_2\cap H|=s_2$. Then:
\[s_1+s_2=\frac{2(z+r)-1\pm\sqrt{4r-3}}{4}\]
\end{proposition}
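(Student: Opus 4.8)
The plan is to count, in two ways, the number of vertices of the index-$2$ subgroup $H$ that lie within distance $2$ of a fixed base vertex $u\in H$. Write $d=r+z$ for the common out-degree of $\Gamma$ and $m=s_1+s_2=|S\cap H|$ (the intersections $S_1\cap H$ and $S_2\cap H$ being disjoint). Since $\Gamma$ has diameter $2$, every vertex of $H$ is reached from $u$ by a walk of length $0$, $1$ or $2$, so $|H|=n/2$ equals the number of such vertices, and the whole argument reduces to computing that number in terms of $m$ and $d$.

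First I would record the behaviour of arcs relative to the subgroup: because $u\in H$, an arc $u\to us$ stays in $H$ precisely when $s\in H$, and a length-$2$ walk $u\to us\to ust$ ends in $H$ precisely when $s,t$ are both in $H$ or both outside $H$. Hence $u$ has exactly $m$ out-neighbours in $H$, and the number of length-$2$ walks from $u$ ending in $H$ is $m^2+(d-m)^2$.

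The key step is to pass from walks to distinct vertices using the Moore-graph uniqueness property. By Proposition~\ref{prop:mmprops}(i), no length-$2$ walk from $u$ ends at an out-neighbour of $u$ or at a vertex already reached by another length-$\le 2$ walk; the only repetitions among the length-$2$ walks are the closed walks $u\to v\to u$, which occur exactly for the undirected neighbours $v$ of $u$ (so $r$ of them, of which $s_1$ stay inside $H$, while all $r$ are counted in $m^2+(d-m)^2$ since they return to $u\in H$). Subtracting, the number of vertices of $H$ at distance exactly $2$ from $u$ is $m^2+(d-m)^2-r$, so
\[
\frac{n}{2}=1+m+m^2+(d-m)^2-r .
\]
Finally I would substitute $n=d^2+d+1-r$ (which is $(z+r)^2+z+1$ rewritten via $z=d-r$), clear denominators, and collect terms to reach the quadratic $4m^2+(2-4d)m+(d^2-d+1-r)=0$. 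Its discriminant collapses to $4(4r-3)$, and solving for $m$ and re-substituting $d=z+r$ yields the claimed formula.

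The main obstacle is the bookkeeping in the third step: one must be sure that the only coincidences among the $d^2$ length-$2$ walks from $u$ are the $r$ closed ones, that none of them land on an out-neighbour of $u$, and that these facts are unaffected by intersecting everything with $H$. All of this is forced by Proposition~\ref{prop:mmprops}(i)--(ii), but it needs to be spelled out carefully to justify the clean count $m^2+(d-m)^2-r$; the algebra afterwards is routine.
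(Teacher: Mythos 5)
Your proof is correct and is essentially the paper's own argument: both count $|H|=n/2$ by the unique expression of each element of $H$ as a product of at most two generators (equivalently, paths of length at most $2$ from a vertex of $H$), split the two-factor products according to whether both factors lie in $H$ or both outside, subtract the $r$ degenerate products $ss^{-1}$ with $s\in S_1$, and solve the resulting quadratic $4m^2+(2-4d)m+(d^2-d+1-r)=0$ with discriminant $4(4r-3)$. The only difference is cosmetic bookkeeping (you work with $m=s_1+s_2$ throughout, the paper first writes the count in terms of $s_1,s_2$ separately before collapsing to $s=s_1+s_2$).
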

\begin{proof}
We know each non-identity element of $H$ can be expressed uniquely as a product of 1 or 2 elements of $S$. We count these products.
Firstly, there are $s_1+s_2$ elements of $S\cap H$.
Any other element is either a product of 2 elements of $S\cap H$ or 2 elements of $S\cap(G\setminus H)$.
In the first case there are $s_1(s_1-1)+2s_1s_2+s_2^2$ possibilities.
In the second case there are $(r-s_1)(r-s_1-1)+2(r-s_1)(z-s_2)+(z-s_2)^2$.
Writing $s=s_1+s_2$ we see following some manipulation that the total number of elements of $H$ which we can write as a product of 0, 1 or 2 elements of $S$ is
$2s^2+s(1-2(z+r))+(z+r)^2-r+1$.
But $H$ is an index 2 subgroup and so contains exactly $((z+r)^2+z+1)/2$ elements.
Solving this quadratic equation for $s$ yields the stated result.
\end{proof}
It might be thought that this provides a very strong condition, since the expression for $s_1+s_2$ must clearly give an integer result.
However, it is interesting that Bos\'ak's condition on allowable values of $r,z$ means that this expression always gives one integer solution for $s_1+s_2$.
Nevertheless, the condition does give a useful way to cut down the search space when we have an index 2 subgroup $H$, since it precisely determines the overall
split of generators between $H$ and $G\setminus H$.
In addition, we have a useful corollary allowing us to exclude some groups from consideration entirely.
\begin{corollary}
Suppose $\Gamma$ and $G$ are as in the statement of Proposition~\ref{prop:mmcayidx2}.
Then if $2(z+r)-\sqrt{4r-3}>9$ then $G$ cannot contain an index 2 abelian subgroup $H$. 
\end{corollary}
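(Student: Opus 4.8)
The plan is to argue by contradiction. Suppose $G$ has an index 2 abelian subgroup $H$, and let $s_1=|S_1\cap H|$ and $s_2=|S_2\cap H|$ as in Proposition~\ref{prop:mmcayidx2}, so that $|S\cap H|=s_1+s_2$. The crucial observation is that $S\cap H$ must be very small. Indeed, every pair of elements of $S\cap H$ commutes, since both lie in the abelian group $H$; but Proposition~\ref{prop:mmcayprops}(iv) says that the only distinct commuting pairs in $S$ are the inverse pairs contained in $S_1$. Hence if $S\cap H$ contained three distinct elements $a,b,c$, then both $b$ and $c$ would have to equal $a^{-1}$, which is absurd. Therefore $|S\cap H|\le 2$, i.e.\ $s_1+s_2\le 2$.

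It then remains to feed this into Proposition~\ref{prop:mmcayidx2}, which gives
\[s_1+s_2=\frac{2(z+r)-1\pm\sqrt{4r-3}}{4}.\]
Since $4r-3\ge 1>0$, the right-hand side is at least $\tfrac{1}{4}\bigl(2(z+r)-1-\sqrt{4r-3}\bigr)$ regardless of which sign actually occurs. Combining this with $s_1+s_2\le 2$ and clearing the denominator yields $2(z+r)-\sqrt{4r-3}\le 9$, and the contrapositive of this implication is exactly the statement of the corollary.

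The argument is short, so there is no serious obstacle; the one point requiring care is the sign ambiguity in Proposition~\ref{prop:mmcayidx2}. Because we have only an \emph{upper} bound on $s_1+s_2$, we must compare it against the \emph{smaller} of the two candidate values, which is why the $-\sqrt{4r-3}$ term (and hence the constant $9$ rather than something larger) appears in the final inequality. All of the real content is in the single observation that a mixed Moore Cayley generating set meets any abelian subgroup in at most two elements, which is immediate from Proposition~\ref{prop:mmcayprops}(iv).
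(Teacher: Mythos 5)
Your proof is correct and follows essentially the same route as the paper's: bound $|S\cap H|\le 2$ via Proposition~\ref{prop:mmcayprops}(iv) applied to the abelian subgroup $H$, then compare with the value of $s_1+s_2$ forced by Proposition~\ref{prop:mmcayidx2}. The only difference is cosmetic (you phrase it as a contradiction/contrapositive and spell out the sign issue, which the paper leaves implicit).
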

\begin{proof}
If $H$ is an index 2 abelian subgroup of $G$, then if $2(z+r)-\sqrt{4r-3}>9$,
by Proposition~\ref{prop:mmcayidx2} the generating set $S$ contains more than 2 elements of $H$.
This is contrary to Proposition~\ref{prop:mmcayprops}(iv).
\end{proof}

We can now describe the search algorithm. Given a feasible pair $z,r$ we use a \texttt{GAP}~\cite{GAP4} script.
\begin{enumerate}[label=\arabic*.,topsep=0pt,itemsep=0.5ex]
	\item Find all groups $G$ of order $n=(z+r)^2+z+1$.
	\item If $G$ has an abelian index 2 subgroup, ignore it.
	\item Compute the list $U$ of orbit representatives of all inverse-closed sets $A$ of size $r$ such that $|A\cup AA|=r^2+1$.
	\item If $G$ admits an index 2 subgroup $H$, delete any sets from $U$ which do not satisfy Proposition~\ref{prop:mmcayidx2}.
	\item Compute the list $D$ of all inverse-free sets $B=\{a,b,(ab)^{-1}\}$ such that $|B\cup BB|=12$.
	\item Try (recursively) to extend each $S\in U$ by adding directed generators of order 3 or triples from $D$ until we have added $z$ generators.
\end{enumerate}

\section{Search results}
Results of the search on feasible orders up to 200 are in Table~\ref{tab:moore1}. For completeness the case $r=1$ is included.
As explained above, we know there is a unique Moore graph with $r=1$ for every $z\geq 1$, 
but these are Cayley only if $q=z+2$ is a prime power.
The algorithm reproduces all the known Cayley Moore graphs and confirms that there are no more examples below order 200.

We then continued the search for feasible orders up to 500. The results are in Table~\ref{tab:moore2}. 
The algorithm was unable to complete the search at order 486 due to the large numbers of groups and the increasing search space.
However, there are no more examples at any of the other feasible orders up to 485.

\begin{table}[ht]\centering
\begin{tabular}[t]{|cccc|}
\hline
$n$ & $r$ & $z$ & Graphs \\
\hline
18 & 3 & 1 & 1\\
40 & 3 & 3 & 0\\
54 & 3 & 4 & 0\\
84 & 7 & 2 & 0\\
88 & 3 & 6 & 0\\
108 & 3 & 7 & 2\\
150 & 7 & 5 & 0\\
154 & 3 & 9 & 0\\
180 & 3 & 10 & 0\\
\hline
\end{tabular}
\qquad
\begin{tabular}[t]{|cccc|}
\hline
$n$ & $r$ & $z$ & Graphs \\
\hline
6 & 1 & 1 & 1\\
12 & 1 & 2 & 1\\
20 & 1 & 3 & 1\\
30 & 1 & 4 & 0\\
42 & 1 & 5 & 1\\
56 & 1 & 6 & 1\\
72 & 1 & 7 & 1\\
90 & 1 & 8 & 0\\
110 & 1 & 9 & 1\\
132 & 1 & 10 & 0\\
156 & 1 & 11 & 1\\
182 & 1 & 12 & 0\\
\hline
\end{tabular}
\caption{Cayley Moore graphs up to order 200}
\label{tab:moore1}
\end{table}

\begin{table}\centering
\begin{tabular}[t]{|cccc|}
\hline
$n$ & $r$ & $z$ & Graphs \\
\hline
204 & 7 & 7 & 0\\
238 & 3 & 12 & 0\\
270 & 3 & 13 & 0\\
294 & 13 & 4 & 0\\
300 & 7 & 10 & 0\\
340 & 3 & 15 & 0\\
368 & 13 & 6 & 0\\
374 & 7 & 12 & 0\\
378 & 3 & 16 & 0\\
460 & 3 & 18 & 0\\
486 & 21 & 1 & ?\\
\hline
\end{tabular}
\qquad
\begin{tabular}[t]{|cccc|}
\hline
$n$ & $r$ & $z$ & Graphs \\
\hline
210 & 1 & 13 & 0\\
240 & 1 & 14 & 1\\
272 & 1 & 15 & 1\\
306 & 1 & 16 & 0\\
342 & 1 & 17 & 1\\
380 & 1 & 18 & 0\\
420 & 1 & 19 & 0\\
462 & 1 & 20 & 0\\
\hline
\end{tabular}
\caption{Cayley Moore graphs from order 200 to 500}
\label{tab:moore2}
\end{table}

We summarise these results as tabulated in Tables~\ref{tab:moore1} and~\ref{tab:moore2} in a theorem.
\begin{theorem}
Up to order 485, the only mixed Moore Cayley graphs of undirected degree $r$, directed degree $z$ and diameter 2 are as follows.
\begin{itemize}
	\item $r=1$ and $z\leq 20$ where $z+2$ is a prime power (Kautz graphs).
	\item $r=3$ and $z=1$ (Bos\'ak's graph).
	\item $r=3$ and $z=7$ (the two graphs of J\o rgensen).
\end{itemize}
\end{theorem}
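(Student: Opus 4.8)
The plan is to reduce the statement to a finite computation whose correctness rests entirely on the structural results already proved. First I would use Bos\'ak's condition --- $r=(c^2+3)/4$ for some odd integer $c$ dividing $(4z-3)(4z+5)$ --- to enumerate all feasible parameter pairs $(r,z)$ with $n=(z+r)^2+z+1\le 485$. Since $n$ is bounded and grows with both $r$ and $z$, only finitely many pairs survive; these are precisely the pairs appearing in Tables~\ref{tab:moore1} and~\ref{tab:moore2} with $n\le 485$, that is, all the entries listed there except the pair $(21,1)$ of order $486$, for which the search is incomplete.

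For each feasible pair with $r\ge 3$ I would run the search algorithm of Section~3 and argue that it is exhaustive. The steps are: enumerate the groups $G$ of order $n$ from \texttt{GAP}'s~\cite{GAP4} small-groups library; discard any $G$ ruled out by the Corollary to Proposition~\ref{prop:mmcayidx2} (those with an abelian index-2 subgroup, when $2(z+r)-\sqrt{4r-3}>9$); for each surviving $G$ compute a set of $\Aut(G)$-orbit representatives of the inverse-closed candidate sets $S_1$ of size $r$ with $|S_1\cup S_1S_1|=r^2+1$; if $G$ has an index-2 subgroup $H$, retain only those $S_1$ compatible with the split of generators between $H$ and $G\setminus H$ forced by Proposition~\ref{prop:mmcayidx2}; and finally attempt recursively to adjoin $z$ directed generators --- order-$3$ elements, or triples $\{a,b,(ab)^{-1}\}$ drawn from the precomputed list $D$ --- so that $\Cay(G,S_1\cup S_2)$ has diameter~2. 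Each pruning step is licensed by Propositions~\ref{prop:mmprops}, \ref{prop:mmcayprops} and~\ref{prop:mmcayidx2}, so no candidate mixed Moore Cayley graph can be discarded, and the search for each pair is therefore complete.

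The case $r=1$ I would treat directly rather than by brute force: here every positive $z$ is feasible, the unique mixed Moore graph with these parameters is the Kautz digraph $Ka(z+1,2)$, and it is Cayley if and only if $z+2$ is a prime power (see~\cite{Brunat1995}). Running the algorithm at the orders $n=z^2+2z+3$ for $z\le 20$ reproduces exactly those Kautz graphs and returns nothing else, matching the right-hand column of Table~\ref{tab:moore1}. Collecting the output of all the runs then yields Bos\'ak's graph at $n=18$, J\o rgensen's transpose pair at $n=108$, the Kautz graphs at the relevant $r=1$ orders, and an empty answer at every other feasible order up to $485$, which is the assertion of the theorem.

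I expect the main obstacle to be computational cost rather than conceptual difficulty. At $n=486$ (the pair $r=21$, $z=1$) the number of groups of that order together with the expense of the $\Aut(G)$-orbit enumeration in step~3 makes the search infeasible in practice, which is exactly why the stated bound is $485$; the effort goes into confirming that the orbit computation stays correct and terminates in acceptable time for the largest groups that do arise (for instance at $n=460$ and $n=462$), and into checking that the feasibility filters prune enough of the search tree for the recursive extension step to finish at those orders.
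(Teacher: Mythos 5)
Your proposal matches the paper's own argument: it reduces the theorem to the same finite computation, justifies the exhaustiveness of the search via Propositions~\ref{prop:mmprops}, \ref{prop:mmcayprops} and~\ref{prop:mmcayidx2} (and the corollary on abelian index-2 subgroups), handles $r=1$ via the known uniqueness of the Kautz digraphs and the prime-power criterion of~\cite{Brunat1995}, and correctly identifies order 486 as the point where the computation becomes infeasible. This is essentially the same approach the paper takes, so no further comparison is needed.
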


\bibliographystyle{plain}

\end{document}